\documentclass[12pt]{article}
\usepackage[T1]{fontenc}
\usepackage[margin=1in]{geometry}
\usepackage{amsmath,amssymb,amsthm}
\usepackage{graphicx}
\usepackage{booktabs}
\usepackage{algorithm}
\usepackage{algpseudocode}

\usepackage{xcolor}

\newcommand{\bx}{\mathbf{x}}
\newcommand{\by}{\mathbf{y}}
\newcommand{\bz}{\mathbf{z}}
\newcommand{\bw}{\mathbf{w}}
\newcommand{\br}{\mathbf{r}}
\newcommand{\bxi}{\boldsymbol{\xi}}
\newcommand{\bfeta}{\boldsymbol{\eta}}
\newcommand{\vth}{\boldsymbol{\vartheta}}

\newcommand{\E}{\mathbb{E}}
\newcommand{\R}{\mathbb{R}}
\newcommand{\Rn}{\mathbb{R}^n}
\newcommand{\Rd}{\mathbb{R}^d}
\newcommand{\Lop}{\mathcal{L}}
\newcommand{\hatm}{\hat{m}}
\newcommand{\hatE}{\widehat{E}}

\theoremstyle{plain}
\newtheorem{proposition}{Proposition}
\newtheorem{remark}{Remark}

\title{\textbf{Weak Adversarial Neural Pushforward Method for the\\ McKean--Vlasov / Mean-Field Fokker--Planck Equation}}

\author{Andrew Qing He\thanks{Department of Mathematics, Southern Methodist University,
Dallas, TX, USA. \texttt{andrewho@smu.edu}}
\and
Wei Cai\thanks{Department of Mathematics, Southern Methodist University,
Dallas, TX, USA. \texttt{cai@smu.edu}}}

\date{March 23, 2026}

\begin{document}
\maketitle

\begin{abstract}
We extend the Weak Adversarial Neural Pushforward Method (WANPM) \cite{HeCai2025} to
the McKean--Vlasov mean-field Fokker--Planck equation, covering both the stationary and
time-dependent cases. The key observation is that the mean-field nonlinearity --- an
expectation under the solution distribution --- is naturally estimated by Monte Carlo
sampling from the pushforward network, requiring no change to the architecture and only
minor modifications to the training loop. For the quadratic (granular media) interaction
kernel, the interaction term reduces to the batch sample mean, eliminating secondary
sampling entirely. We also identify a dimension-dependent frequency initialization rule
for the adversarial test functions, necessary to avoid spurious minimizers. Numerical experiments on linear McKean--Vlasov benchmarks in 2,
5, 20, and 100 dimensions confirm accurate recovery of the exact Gaussian stationary
and transient distributions, with training times ranging from 27 seconds (2D) to 10
minutes (100D) on a single GPU.
\end{abstract}

\textbf{Keywords:} McKean--Vlasov equation, mean-field Fokker--Planck, neural pushforward
map, weak adversarial network, plane-wave test functions, granular media, interacting
particle systems.

\tableofcontents

\section{Introduction}

The standard Fokker--Planck equation (FPE) governs the time evolution of the probability
density $\rho(t,\bx)$ of a stochastic process whose drift and diffusion coefficients are
prescribed functions of $(t,\bx)$. A natural and physically important generalization arises
when these coefficients depend on the evolving distribution $\rho_t$ itself, yielding the
McKean--Vlasov or mean-field Fokker--Planck equation:
\begin{equation}
  \partial_t \rho(t,\bx)
  = -\nabla_{\bx} \cdot \bigl[b(\bx,\rho_t)\,\rho(t,\bx)\bigr]
    + \frac{\sigma^2}{2}\,\Delta\rho(t,\bx),
  \label{eq:MKV}
\end{equation}
with initial condition $\rho(0,\bx)=\rho_0(\bx)$. This class of equations appears in
interacting particle systems and granular media \cite{McCann1997}, Curie--Weiss spin
systems, stochastic flocking \cite{HaTadmor2008}, mathematical finance and mean-field
games \cite{CarmonaDelarue2018}, and the mean-field analysis of overparameterized
neural networks \cite{MeiMontanariNguyen2018}.

The nonlinearity of~\eqref{eq:MKV} --- the unknown $\rho$ appears both as the evolved
density and inside the convolution integral defining the drift --- makes it significantly
harder to solve than the standard FPE. Particle methods exploit the propagation of chaos
phenomenon \cite{Sznitman1991}: as $N\to\infty$, the empirical measure of $N$
interacting particles converges to the solution of~\eqref{eq:MKV}. However, particle
methods suffer from statistical noise, require a large number of particles for accuracy,
and do not naturally produce a functional representation of $\rho$ queryable at arbitrary
points.

Neural network methods for solving Fokker--Planck equations have been developed in
recent works \cite{XuFP2020,LiuNPFP2022,EHanLi2019,ZhaiDobsonLi2022}. In
\cite{HeCai2025}, the Weak Adversarial Neural Pushforward Method (WANPM) was
introduced, which learns a neural pushforward map $F_{\vth}:\Rd\to\Rn$ that transforms
samples from a simple base distribution into samples from the solution distribution,
trained via an adversarial weak formulation with plane-wave test functions. WANPM avoids
the need for explicit density representations, invertible architectures, or score
estimation, and scales to high-dimensional problems.

The present paper extends WANPM to the McKean--Vlasov setting. For the quadratic
(granular media) kernel $W(\bx-\by)=\frac{1}{2}\|\bx-\by\|^2$, the mean-field
nonlinearity reduces to a sample mean, requiring no secondary sampling. We develop both
the stationary and time-dependent formulations, analyze training subtleties that arise
from the self-referential structure of the equation, and demonstrate accuracy across
dimensions from 2 to 100.

The paper is organized as follows. Section~\ref{sec:eq} introduces the McKean--Vlasov
equation and the quadratic kernel simplification. Section~\ref{sec:stationary} develops
WANPM for the stationary problem. Section~\ref{sec:timedep} extends the method to the
time-dependent case, including a discussion of adversarial frequency initialization.
Section~\ref{sec:experiments} presents numerical experiments, and Section~\ref{sec:conclusion}
concludes.

\section{The McKean--Vlasov Equation}
\label{sec:eq}

\subsection{Derivation from an Interacting Particle System}

Consider $N$ particles with positions $X_i(t)\in\Rn$ obeying
\begin{equation}
  dX_i = b\!\left(X_i,\,\frac{1}{N}\sum_{j=1}^N \delta_{X_j(t)}\right)dt + \sigma\,dW_i,
  \quad i=1,\ldots,N,
  \label{eq:particles}
\end{equation}
where $W_i$ are independent standard Brownian motions and $\delta_{\by}$ denotes the
Dirac measure at $\by$. By the propagation of chaos phenomenon \cite{Sznitman1991},
as $N\to\infty$ the single-particle density satisfies the McKean--Vlasov equation~\eqref{eq:MKV}.

\subsection{The Aggregation--Diffusion Form}

The most widely studied instance takes the drift as the superposition of a confinement
force and a pairwise interaction:
\begin{equation}
  b(\bx,\rho_t) = -\nabla V(\bx) - \nabla(W*\rho_t)(\bx)
  = -\nabla V(\bx) - \int_{\Rn}\nabla W(\bx-\by)\,\rho(t,\by)\,d\by.
  \label{eq:drift}
\end{equation}
Substituting~\eqref{eq:drift} into~\eqref{eq:MKV} yields the aggregation--diffusion
equation
\begin{equation}
  \partial_t \rho
  = \nabla\cdot\bigl[(\nabla V)\rho\bigr]
  + \nabla\cdot\bigl[(\nabla W*\rho)\rho\bigr]
  + \frac{\sigma^2}{2}\Delta\rho.
  \label{eq:aggdiff}
\end{equation}
The unknown $\rho$ appears both as the evolved density and inside the convolution; this
self-referential structure is the central mathematical difficulty.

\subsection{Quadratic Interaction Kernel and Complexity Reduction}
\label{sec:quadratic}

We focus on the quadratic (granular media) kernel
$W(\bx-\by)=\frac{1}{2}\|\bx-\by\|^2$, for which a key simplification holds.

\begin{proposition}[\cite{McCann1997}]
\label{prop:quadratic}
Let $W(\bx-\by)=\frac{1}{2}\|\bx-\by\|^2$. Then
\begin{equation}
  \int_{\Rn}\nabla W(\bx-\by)\,\rho_t(\by)\,d\by
  = \bx - m(t),
  \qquad m(t)=\E_{\by\sim\rho_t}[\by].
  \label{eq:quadreduction}
\end{equation}
\end{proposition}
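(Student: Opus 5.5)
The proof is a direct computation, and I would carry it out in three steps. First, I compute the gradient of the kernel. Writing $W(\bz)=\frac12\|\bz\|^2$, we have $\nabla W(\bz)=\bz$. In the convolution \eqref{eq:drift} the gradient is taken with respect to $\bx$, so $\nabla_{\bx}W(\bx-\by)=\bx-\by$. Before going further I would make the standing assumptions explicit: $\rho_t$ is a probability density, so $\int_{\Rn}\rho_t(\by)\,d\by=1$, and it has a finite first moment, $\int_{\Rn}\|\by\|\rho_t(\by)\,d\by<\infty$. Under these assumptions the integral in \eqref{eq:quadreduction} converges absolutely for every fixed $\bx$, and $m(t)$ is well defined.

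Second, I substitute this gradient into the integral and split it by linearity:
\begin{equation*}
\int_{\Rn}(\bx-\by)\rho_t(\by)\,d\by
= \bx\int_{\Rn}\rho_t(\by)\,d\by - \int_{\Rn}\by\,\rho_t(\by)\,d\by .
\end{equation*}
The splitting is legitimate because each piece is separately absolutely integrable, by the moment assumption. The vector integral is understood componentwise.

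Third, I identify the two pieces. Unit mass gives $\int_{\Rn}\rho_t(\by)\,d\by=1$, so the first term is $\bx$. The second term is $\E_{\by\sim\rho_t}[\by]=m(t)$ by definition. Together these yield $\bx-m(t)$, which is \eqref{eq:quadreduction}.

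The computation itself has no real obstacle. The only point that needs care is the integrability hypothesis: without a finite first moment, neither $m(t)$ nor the convolution $\nabla W*\rho_t$ is defined. I would therefore state the finite-first-moment assumption explicitly. I would also add a remark that, in the WANPM setting, $m(t)$ is estimated by the batch sample mean of the pushforward samples $F_{\vth}(\cdot)$, which is the source of the "no secondary sampling" claim.
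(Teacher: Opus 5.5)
Your proposal is correct and matches the paper's proof: compute $\nabla_{\bx}W(\bx-\by)=\bx-\by$, then integrate against $\rho_t$ using unit mass and the definition of $m(t)$. Stating the finite-first-moment hypothesis explicitly is a worthwhile addition that the paper leaves implicit.
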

\begin{proof}
Direct computation: $\nabla_{\bx}W(\bx-\by)=\bx-\by$, so
$\int(\bx-\by)\rho_t(\by)\,d\by=\bx-m(t)$.
\end{proof}

With confinement $V(\bx)=\frac{\theta}{2}\|\bx\|^2$ and the quadratic kernel, the full
drift simplifies to
\begin{equation}
  b(\bx,\rho_t) = -\theta\bx - (\bx - m(t))
  = -(\theta+1)\bx + m(t) =: -\lambda\bx + m(t),
  \label{eq:lineardrift}
\end{equation}
where $\lambda:=\theta+1$ is the effective mean-reversion rate. The interaction integral
reduces to a single scalar $m(t)=\E[\bx]$, which is the sample mean of the primary
training batch during training --- no secondary sampling is needed.

\section{WANPM for the Stationary McKean--Vlasov Equation}
\label{sec:stationary}

\subsection{The Stationary Problem}

Setting $\partial_t\rho=0$ in~\eqref{eq:MKV} yields
\begin{equation}
  -\nabla\cdot\bigl[b(\bx,\rho)\,\rho(\bx)\bigr]
  + \frac{\sigma^2}{2}\Delta\rho(\bx) = 0,
  \quad \int_{\Rn}\rho(\bx)\,d\bx=1.
  \label{eq:stationary}
\end{equation}
This is a nonlinear eigenvalue problem: the operator acting on $\rho$ depends on $\rho$
itself through the convolution. For the quadratic kernel with $V(\bx)=\frac{\theta}{2}\|\bx\|^2$,
the drift~\eqref{eq:lineardrift} gives $b(\bx,\rho)=-\lambda\bx+m^*$ where
$m^*=\E_\rho[\bx]$. By the symmetry of $V$ and $W$, the unique stationary solution is
$\rho^*=\mathcal{N}(0,\frac{\sigma^2}{2\lambda}I)$ with $m^*=0$, giving
$b(\bx,\rho^*)=-\lambda\bx$.

\subsection{Weak Formulation}
\label{sec:weakstat}

To derive the weak form, multiply~\eqref{eq:stationary} by a smooth test function
$f(\bx)$ and integrate over $\Rn$. The right-hand side becomes
\begin{equation*}
  \int_{\Rn} f(\bx)\,\nabla\cdot\!\left[\rho(\bx)\,\nabla\!\left(
    V(\bx) + (W*\rho)(\bx) + \tfrac{\sigma^2}{2}\rho(\bx)
  \right)\right]d\bx.
\end{equation*}
One integration by parts moves the divergence onto $\nabla f$:
\begin{equation*}
  = -\int_{\Rn}\sum_{i=1}^n \frac{\partial f}{\partial x_i}
    \left[\rho(\bx)\,\frac{\partial}{\partial x_i}\!\left(
      V(\bx) + (W*\rho)(\bx) + \tfrac{\sigma^2}{2}\rho(\bx)
    \right)\right]d\bx.
\end{equation*}
Separating the nonlinear diffusion term $\frac{\sigma^2}{2}\rho$ from the remaining
drift and applying integration by parts once more to that term gives
\begin{align*}
  &= -\int_{\Rn}\rho(\bx)\sum_{i=1}^n\frac{\partial f}{\partial x_i}
      \frac{\partial}{\partial x_i}\!\Bigl(V(\bx)+(W*\rho)(\bx)\Bigr)\,d\bx
   + \frac{\sigma^2}{2}\int_{\Rn}\rho(\bx)\sum_{i=1}^n
      \frac{\partial^2 f}{\partial x_i^2}\,d\bx.
\end{align*}
Expanding the interaction term $(W*\rho)(\bx)=\int_{\Rn}W(\bx-\by)\rho(\by)\,d\by$
and collecting by role, the right-hand side decomposes as
\begin{equation}
  -E_V - E_W + E_D = 0,
  \label{eq:weakstat}
\end{equation}
where
\begin{align}
  E_V &= \int_{\Rn}\rho(\bx)\sum_{i=1}^n
    \frac{\partial f(\bx)}{\partial x_i}\frac{\partial V(\bx)}{\partial x_i}\,d\bx,
    \label{eq:EV_stat}\\
  E_W &= \int_{\Rn}\!\int_{\Rn}\rho(\bx)\rho(\by)\sum_{i=1}^n
    \frac{\partial f(\bx)}{\partial x_i}\frac{\partial W}{\partial x_i}(\bx-\by)\,d\by\,d\bx,
    \label{eq:EW_stat}\\
  E_D &= \frac{\sigma^2}{2}\int_{\Rn}\rho(\bx)\sum_{i=1}^n
    \frac{\partial^2 f(\bx)}{\partial x_i^2}\,d\bx.
    \label{eq:ED_stat}
\end{align}
Equation~\eqref{eq:weakstat} is the weak stationarity condition that the pushforward
network is trained to satisfy.

Following \cite{HeCai2025}, we choose plane-wave test functions
$f^{(k)}(\bx)=\sin(\bw^{(k)}\cdot\bx+b^{(k)})$, whose derivatives are all analytic:
\begin{equation}
  \nabla f^{(k)}(\bx) = \bw^{(k)}\cos(\bw^{(k)}\cdot\bx+b^{(k)}),
  \qquad
  \Delta f^{(k)}(\bx) = -\|\bw^{(k)}\|^2\sin(\bw^{(k)}\cdot\bx+b^{(k)}).
  \label{eq:planewavederiv}
\end{equation}
For the quadratic kernel, substituting~\eqref{eq:lineardrift} into $E_V+E_W$ and using
$\nabla W(\bx-\by)=\bx-\by$ gives the combined drift contribution
\begin{equation}
  (E_V+E_W)\big|_\text{quad}
  = \int_{\Rn}\rho(\bx)\,(-\lambda\bx+m^*)\cdot\bw^{(k)}\cos(\bw^{(k)}\cdot\bx+b^{(k)})\,d\bx,
  \label{eq:residual_stat}
\end{equation}
where $m^*=\E_\rho[\bx]$ is the only quantity depending nonlinearly on $\rho$.

\subsection{Neural Parametrization}

We parametrize the stationary distribution $\rho^*$ via a neural pushforward map
$F_{\vth}:\Rd\to\Rn$, where $d$ is the dimension of the base distribution (not
necessarily equal to $n$). If $\br\sim\pi_\mathrm{base}$, then $\bx=F_{\vth}(\br)$ is a
sample from the learned distribution $\rho_{\vth}$. The pushforward parametrization
naturally enforces the normalization constraint. In the stationary setting, $F_{\vth}$
takes only $\br$ as input.

The $K$ test functions have learnable parameters $\eta=\{\bw^{(k)},b^{(k)}\}_{k=1}^K$.

\subsection{Loss Function}

The training objective is the adversarial squared weak residual:
\begin{equation}
  \mathcal{L}[\vth,\eta] = \frac{1}{K}\sum_{k=1}^K
  \left[\frac{1}{M}\sum_{m=1}^M
    \Bigl(-E_V^{(k,m)} - E_W^{(k,m)} + E_D^{(k,m)}\Bigr)
  \right]^2,
  \label{eq:loss_stat}
\end{equation}
where $\{\bx^{(m)}=F_{\vth}(\br^{(m)})\}_{m=1}^M$ is a batch of pushed samples,
the terms $E_V^{(k,m)}, E_D^{(k,m)}$ are the per-sample contributions
to~\eqref{eq:EV_stat}--\eqref{eq:ED_stat} evaluated via~\eqref{eq:planewavederiv},
and the mean $m^*$ is approximated by the batch mean
\begin{equation}
  \hatm = \frac{1}{M}\sum_{m=1}^M F_{\vth}(\br^{(m)}),
  \quad \br^{(m)}\sim\pi_\mathrm{base},
  \label{eq:batchmean}
\end{equation}
computed from the same batch as the residual so that $\partial\hatm/\partial\vth$
contributes to the total gradient, enforcing the self-consistency constraint
$m^*=\E_{\rho_{\vth}}[\bx]$ through the optimization dynamics.

\subsection{Training Algorithm}

The min-max optimization problem is
\begin{equation}
  \min_{\vth}\max_{\eta}\,\mathcal{L}[\vth,\eta],
  \label{eq:minmax}
\end{equation}
solved by alternating gradient descent on $\vth$ and gradient ascent on $\eta$.
Algorithm~\ref{alg:stationary} summarizes the training loop.

\begin{algorithm}[h]
\caption{WANPM for the Stationary McKean--Vlasov Equation}
\label{alg:stationary}
\begin{algorithmic}[1]
\Require Pushforward net $F_{\vth}$, test functions $\{f^{(k)}\}_{k=1}^K$ with
$\bw^{(k)}$ initialized at scale $\sigma_w$ (see Section~\ref{sec:freqinit}),
batch sizes $M$, $M_W$, learning rates $\eta_\mathrm{gen}$, $\eta_\mathrm{test}$.
\For{each training epoch}
  \State Sample $\{\br^{(m)}\}_{m=1}^M\sim\pi_\mathrm{base}$
  \State Compute pushed samples $\bx^{(m)}=F_{\vth}(\br^{(m)})$
  \State Compute batch mean $\hatm=\frac{1}{M}\sum_m\bx^{(m)}$
    \quad {\small(gradient flows through $\hatm$)}
  \State Evaluate $\Lop[\rho_{\vth}]\,f^{(k)}(\bx^{(m)})$ using~\eqref{eq:residual_stat}
    with $m^*\leftarrow\hatm$
  \State Compute residuals $R^{(k)}=\frac{1}{M}\sum_m\Lop[\rho_{\vth}]\,f^{(k)}(\bx^{(m)})$
  \State Compute loss $\mathcal{L}=\frac{1}{K}\sum_k(R^{(k)})^2$
  \State Generator step: $\vth\leftarrow\vth-\eta_\mathrm{gen}\nabla_{\vth}\mathcal{L}$
  \If{adversary update step}
    \State $\eta\leftarrow\eta+\eta_\mathrm{test}\nabla_{\eta}\mathcal{L}$
  \EndIf
\EndFor
\end{algorithmic}
\end{algorithm}

\section{Extension to the Time-Dependent Problem}
\label{sec:timedep}

\subsection{Weak Formulation}
\label{sec:weaktime}

To derive the weak form for the time-dependent problem, multiply~\eqref{eq:MKV} by a
smooth test function $\psi(t,\bx)$ and integrate over $\Rn$. The spatial integral of
the right-hand side proceeds identically to the stationary derivation of
Section~\ref{sec:weakstat}, yielding
\begin{equation*}
  \int_{\Rn}\psi(t,\bx)\,\partial_t\rho\,d\bx
  = -E_V(t) - E_W(t) + E_D(t),
\end{equation*}
where $E_V(t), E_W(t), E_D(t)$ are the time-$t$ versions of the
integrals~\eqref{eq:EV_stat}--\eqref{eq:ED_stat} (with $f$ replaced by $\psi(t,\cdot)$).
Integrating both sides over $t\in[0,T]$ and applying integration by parts in time to
the left-hand side,
\begin{align*}
  \int_0^T\!\int_{\Rn}\psi\,\partial_t\rho\,d\bx\,dt
  &= \int_{\Rn}\psi(T,\bx)\rho(T,\bx)\,d\bx
   - \int_{\Rn}\psi(0,\bx)\rho_0(\bx)\,d\bx
   - \int_0^T\!\int_{\Rn}\partial_t\psi\cdot\rho\,d\bx\,dt,
\end{align*}
the space-time weak form becomes
\begin{equation}
  E_T - E_0 - E_t + E_V + E_W - E_D = 0,
  \label{eq:weaktime}
\end{equation}
where we use the plane-wave test functions
$\psi^{(k)}(t,\bx)=\sin(\bw^{(k)}\cdot\bx+\kappa^{(k)}t+b^{(k)})$ and define
\begin{align}
  E_T &= \int_{\Rn}\psi(T,\bx)\,\rho(T,\bx)\,d\bx,
    \label{eq:ET}\\
  E_0 &= \int_{\Rn}\psi(0,\bx)\,\rho_0(\bx)\,d\bx,
    \label{eq:E0}\\
  E_t &= \int_0^T\!\int_{\Rn}\frac{\partial\psi}{\partial t}(t,\bx)\,\rho(t,\bx)\,d\bx\,dt,
    \label{eq:Et}
\end{align}
\begin{align}
  E_V &= \int_0^T\!\int_{\Rn}\rho(t,\bx)\sum_{i=1}^n
    \frac{\partial\psi}{\partial x_i}\frac{\partial V}{\partial x_i}\,d\bx\,dt,
    \label{eq:EVt}\\
  E_W &= \int_0^T\!\int_{\mathbb{R}^{2n}}\rho(t,\bx)\,\rho(t,\by)\sum_{i=1}^n
    \frac{\partial\psi(t,\bx)}{\partial x_i}\frac{\partial W}{\partial x_i}(\bx-\by)\,d\by\,d\bx\,dt,
    \label{eq:EW}\\
  E_D &= \frac{\sigma^2}{2}\int_0^T\!\int_{\Rn}\rho(t,\bx)\sum_{i=1}^n
    \frac{\partial^2\psi}{\partial x_i^2}\,d\bx\,dt.
    \label{eq:EDt}
\end{align}
The derivatives of the plane-wave test function are
\begin{align}
  \frac{\partial\psi^{(k)}}{\partial t} &= \kappa^{(k)}\cos(\bw^{(k)}\cdot\bx+\kappa^{(k)}t+b^{(k)}),
    \nonumber\\
  \frac{\partial\psi^{(k)}}{\partial x_i} &= w_i^{(k)}\cos(\bw^{(k)}\cdot\bx+\kappa^{(k)}t+b^{(k)}),
    \nonumber\\
  \frac{\partial^2\psi^{(k)}}{\partial x_i^2} &= -(w_i^{(k)})^2\sin(\bw^{(k)}\cdot\bx+\kappa^{(k)}t+b^{(k)}).
  \label{eq:pwderiv_t}
\end{align}
The terms $E_T, E_0, E_t, E_V, E_D$ have exactly the same structure as in the standard
FPE weak form of \cite{HeCai2025}; the sole new ingredient is $E_W$, which involves a
double spatial integral over $\rho(t,\cdot)\otimes\rho(t,\cdot)$.

The adversarial loss is the squared residual of~\eqref{eq:weaktime} averaged over $K$
test functions:
\begin{equation}
  \mathcal{L}[\vth,\eta] = \frac{1}{K}\sum_{k=1}^K
  \Bigl(\hatE_T^{(k)} - \hatE_0^{(k)} - \hatE_t^{(k)}
    + \hatE_V^{(k)} + \hatE_W^{(k)} - \hatE_D^{(k)}\Bigr)^2,
  \label{eq:loss_time}
\end{equation}
where each $\hatE$ denotes the Monte Carlo estimator of the corresponding integral,
described below.

\subsection{Pushforward Representation}

We represent $\rho(t,\cdot)$ via a time-parameterized pushforward map:
\begin{equation}
  F_{\vth}(t,\bx_0,\br) = \bx_0 + \sqrt{t}\,\tilde{F}_{\vth}(t,\bx_0,\br),
  \label{eq:pf_time}
\end{equation}
where $\bx_0\sim\rho_0$, $\br\sim\pi_\mathrm{base}$, and
$\tilde{F}_{\vth}:\R^{1+n+d}\to\Rn$. At $t=0$, $F_{\vth}(0,\bx_0,\br)=\bx_0$,
enforcing the initial condition exactly.

\subsection{Monte Carlo Estimators}

For the standard terms $E_t, E_V, E_D$, we draw $M$ i.i.d.\ triples
$(t^{(m)},\bx_0^{(m)},\br^{(m)})$ with $t^{(m)}\sim\mathcal{U}(0,T)$,
$\bx_0^{(m)}\sim\rho_0$, $\br^{(m)}\sim\pi_\mathrm{base}$, and let
$\bxi^{(m)}=F_{\vth}(t^{(m)},\bx_0^{(m)},\br^{(m)})$. The terminal and initial terms
use dedicated batches of size $M_T$ and $M_0$ at fixed times $T$ and $0$.

\paragraph{Estimating $E_W$.}
The interaction term~\eqref{eq:EW} involves a double integral over $\rho(t,\cdot)$:
\begin{equation}
  E_W = \int_0^T \E_{\bxi\sim\rho_t}\!\bigl[\nabla_{\bxi}\psi(t,\bxi)\cdot
    \E_{\bfeta\sim\rho_t}[\nabla W(\bxi-\bfeta)]\bigr]\,dt.
  \label{eq:EW_rep}
\end{equation}
We introduce a dedicated batch of $M_W$ triples
$(t^{(m)},\bxi^{(m)},\bfeta^{(m)})$, where $t^{(m)}\sim\mathcal{U}(0,T)$ and
$\bxi^{(m)},\bfeta^{(m)}$ are two independent pushforward samples at time $t^{(m)}$.
The Monte Carlo estimator is
\begin{equation}
  \hatE_W^{(k)} = \frac{T}{M_W}\sum_{m=1}^{M_W}
    \nabla_{\bxi}\psi^{(k)}(t^{(m)},\bxi^{(m)})\cdot
    \nabla W(\bxi^{(m)}-\bfeta^{(m)}).
  \label{eq:EW_est}
\end{equation}
This estimator is unbiased for any $M_W\geq 1$.

\begin{remark}[Choice of $M_W$]
In practice one may set $M_W=M$ and reuse the time points $\{t^{(m)}\}$ drawn for
the interior terms, with $\bxi^{(m)}$ being the same pushed sample. However, this sample
reuse does not appreciably reduce training cost: $\bfeta^{(m)}$ still requires a full
forward and backward pass since it also depends on $\vth$. Setting $M_W$ independently
gives more flexibility to trade cost against estimator variance.
\end{remark}

\begin{remark}[Quadratic kernel simplification]
For $W(\bx-\by)=\frac{1}{2}\|\bx-\by\|^2$, we have $\nabla W(\bxi-\bfeta)=\bxi-\bfeta$.
Since $\E[\bfeta^{(m)}]=m(t^{(m)})$, one may replace $\bfeta^{(m)}$ by the batch mean
$\hatm(t^{(m)})$ to reduce variance, recovering the formulation of the stationary case
and eliminating the secondary sample entirely. This is what is done in our experiments.
\end{remark}

\begin{remark}[Singular kernels]
When $W$ is singular (e.g.\ $W(\bx)=-\log\|\bx\|$ for Keller--Segel), $\nabla W(\bxi-\bfeta)$
is large when $\bxi\approx\bfeta$. We adopt kernel mollification: replace $\nabla W(\bx)$
by $\nabla W_\varepsilon(\bx)$ evaluated at $\|\bx\|_\varepsilon:=\sqrt{\|\bx\|^2+\varepsilon^2}$
instead of $\|\bx\|$. The bias introduced is $O(\varepsilon^2)$ rather than $O(\varepsilon^{-(n-1)})$
because $\nabla W$ is odd ($\nabla W(-\bz)=-\nabla W(\bz)$) and $\bxi-\bfeta$ has a
symmetric distribution around zero, causing near-field contributions to cancel in pairs.
\end{remark}

\subsection{Tensor-Product Time Sampling for the Mean-Field Drift}

For the time-dependent McKean--Vlasov problem, accurate estimation of
$m(t)=\E_{\rho_t}[\bx]$ is essential because the drift depends on it. A naive approach
--- drawing all $M$ interior samples at $M$ different random times --- produces a
time-averaged constant $\hatm\approx\frac{1}{T}\int_0^T m(t)\,dt$, which biases the
drift and corrupts the learned mean evolution.

The correct approach uses a tensor-product sampling structure: draw $N_T$ fixed
quadrature time nodes $\{t_i\}_{i=1}^{N_T}$ on $(0,T]$ and, at each $t_i$, draw
$M_\mathrm{per}$ independent base samples $(\bx_0^{(j)},\br^{(j)})_{j=1}^{M_\mathrm{per}}$.
Then
\begin{equation}
  \hatm(t_i) = \frac{1}{M_\mathrm{per}}\sum_{j=1}^{M_\mathrm{per}}
    F_{\vth}\!\left(t_i,\bx_0^{(j)},\br^{(j)}\right)
  \label{eq:mean_est}
\end{equation}
is a valid unbiased estimate of $m(t_i)$, and the interior term is approximated by the
quadrature sum
\begin{equation}
  \hatE^{(k)} = \frac{T}{N_T}\sum_{i=1}^{N_T}\frac{1}{M_\mathrm{per}}
    \sum_{j=1}^{M_\mathrm{per}}
    \left[\frac{\partial\psi^{(k)}}{\partial t} + \Lop[\rho_{t_i}]\psi^{(k)}\right]
    \!\left(t_i, F_{\vth}(t_i,\bx_0^{(j)},\br^{(j)})\right).
  \label{eq:interior_est}
\end{equation}
The total number of forward passes $N_T\times M_\mathrm{per}$ matches the standard
WANPM interior batch size, so there is no additional computational cost.

Algorithm~\ref{alg:timedep} summarizes the training loop.

\begin{algorithm}[h]
\caption{WANPM for the McKean--Vlasov Equation (Time-Dependent)}
\label{alg:timedep}
\begin{algorithmic}[1]
\Require Pushforward $F_{\vth}$, test functions $\{{\psi}^{(k)}\}_{k=1}^K$,
batch sizes $M=N_T\times M_\mathrm{per}$, $M_0$, $M_T$, $M_W$,
learning rates $\eta_\mathrm{gen}$, $\eta_\mathrm{test}$.
\For{each training epoch}
  \State \textbf{Terminal:} sample $M_T$ pairs $(\bx_{0,T}^{(m)},\br_T^{(m)})$;
    compute $\hatE_T^{(k)}$.
  \State \textbf{Initial:} sample $M_0$ points $\bx_{0,0}^{(m)}\sim\rho_0$;
    compute $\hatE_0^{(k)}$.
  \State \textbf{Interior:} draw $N_T$ fixed time nodes $\{t_i\}$ and
    $M_\mathrm{per}$ base samples per node;
    compute $\hatm(t_i)$ via~\eqref{eq:mean_est} and
    $\hatE_t^{(k)},\hatE_V^{(k)},\hatE_D^{(k)}$ via~\eqref{eq:interior_est}.
  \State \textbf{Interaction ($E_W$):} sample $M_W$ pairs
    $(\bxi^{(m)},\bfeta^{(m)})$ at their shared times; compute $\hatE_W^{(k)}$ via~\eqref{eq:EW_est}.
  \State Assemble residuals $R^{(k)}=\hatE_T^{(k)}-\hatE_0^{(k)}-\hatE_t^{(k)}+\hatE_V^{(k)}+\hatE_W^{(k)}-\hatE_D^{(k)}$.
  \State Compute loss $\mathcal{L}=\frac{1}{K}\sum_k(R^{(k)})^2$.
  \State Generator step: $\vth\leftarrow\vth-\eta_\mathrm{gen}\nabla_{\vth}\mathcal{L}$.
  \If{adversary update step}
    \State $\eta\leftarrow\eta+\eta_\mathrm{test}\nabla_{\eta}\mathcal{L}$.
  \EndIf
\EndFor
\end{algorithmic}
\end{algorithm}

\subsection{Adversarial Frequency Initialization}
\label{sec:freqinit}

The initialization scale of the frequency vectors $\bw^{(k)}$ in the adversarial test
functions is critical for avoiding spurious minimizers.

For the quadratic kernel, the two-point distribution $\rho_\mathrm{two}$ has the same
mean and variance as the true stationary Gaussian, and satisfies the weak residual
approximately when test function frequencies are small. One can verify analytically
that the residual under $\rho_\mathrm{two}$ at a plane-wave test function with scalar
frequency $w$ and bias $b$ is
\begin{equation}
  R^{(k)}_\mathrm{two} = \sin(wa)\Bigl(\lambda a w\sin(b) - \tfrac{\sigma^2 w^2}{2}\cos(b)\Bigr),
  \label{eq:res_twopoint}
\end{equation}
while the exact Gaussian gives $R^{(k)}_\mathrm{Gaussian}=0$ for all $(w,b)$. For small
$w$ (specifically when $wa\ll 1$), the factor $\sin(wa)\approx wa$ is small, making
$|R^{(k)}_\mathrm{two}|$ comparably small to $|R^{(k)}_\mathrm{Gaussian}|$. In this
regime, the two-point distribution achieves a lower loss than the exact Gaussian.

When frequencies are initialized at a sufficiently large scale, the two-point distribution
incurs a large residual while the Gaussian remains near zero, so the adversary correctly
penalizes the spurious solution.

\paragraph{Dimension-dependent frequency scaling.}
Beyond this issue of spurious minimizers, there is a second and independent consideration:
the test function wavelength must match the spatial scale of the solution.

An $\mathcal{N}(0,\sigma_w^2 I_{d\times d})$ random vector has expected length
$\mathcal{O}(\sigma_w\sqrt{d})$, so the plane-wave test function
$\sin(\bw^{(k)}\cdot\bx+b^{(k)})$ has effective wavelength $\mathcal{O}(\sigma_w\sqrt{d}/\|\bx\|)$
near the bulk of the distribution. To correctly probe the solution at the spatial scale
$\mathcal{O}(\sigma/\sqrt{2\lambda})$, the frequency should satisfy
$\sigma_w\cdot(\sigma/\sqrt{2\lambda})\approx 1$, giving $\sigma_w\approx\sqrt{2\lambda}/\sigma$.
Equivalently, the test function inner product $\bw^{(k)}\cdot\bx$ should be
$\mathcal{O}(1)$ for typical samples $\bx$.

In higher dimensions, an additional correction arises: the dot product
$\bw^{(k)}\cdot\bx$ with $\bw^{(k)}\sim\mathcal{N}(0,\sigma_w^2 I)$ and $\bx$ at
the scale $\sigma/\sqrt{2\lambda}$ has standard deviation
$\sigma_w\cdot(\sigma/\sqrt{2\lambda})\cdot\sqrt{n}$. Requiring this to be
$\mathcal{O}(1)$ gives the scaling
\begin{equation}
  \sigma_w \approx \sqrt{2\lambda}/(\sigma\sqrt{n}).
  \label{eq:freqscaling}
\end{equation}
This is consistent with the empirically observed rule: for $d=2$,
$\sigma_w\sim\mathcal{N}(0,2^2)$ works well, while for $d=20$, initializing
$\sigma_w\sim\mathcal{N}(0,0.2^2)$ is needed.
Table~\ref{tab:freqscaling} summarizes the initialization scales used in our experiments.

\begin{table}[h]
\centering
\caption{Frequency initialization scales used in experiments. All experiments use
$\theta=1$, $\sigma=1$, $\lambda=2$. The prediction~\eqref{eq:freqscaling} gives
$\sigma_w\approx 2/\sqrt{n}$.}
\label{tab:freqscaling}
\begin{tabular}{cccc}
\toprule
Dimension $n$ & $\sigma_w$ (used) & $2/\sqrt{n}$ (predicted) & Type \\
\midrule
2  & 2.0 & 1.41 & Stationary \\
20 & 0.3 & 0.45 & Stationary \\
5  & 0.1 & 0.89 & Transient \\
100 & 0.1 & 0.20 & Transient \\
\bottomrule
\end{tabular}
\end{table}

\section{Numerical Experiments}
\label{sec:experiments}

\subsection{Benchmark Problem: Linear McKean--Vlasov}

Throughout, we use $V(\bx)=\frac{\theta}{2}\|\bx\|^2$ and
$W(\bx-\by)=\frac{1}{2}\|\bx-\by\|^2$ with $\theta=1$, $\sigma=1$, giving $\lambda=2$.
The exact solution is Gaussian at all times with moments satisfying
\begin{equation}
  \dot{m}(t) = -\theta\,m(t), \qquad
  \dot{\Sigma}(t) = -2(\theta+1)\Sigma(t)+\sigma^2,
  \label{eq:ODEs}
\end{equation}
whose solutions are
\begin{equation}
  m(t) = m_0\,e^{-\theta t}, \qquad
  \Sigma(t) = \left(\Sigma_0 - \frac{\sigma^2}{2\lambda}\right)e^{-2\lambda t}
    + \frac{\sigma^2}{2\lambda}.
  \label{eq:exact}
\end{equation}
The stationary solution is $\rho^*=\mathcal{N}(0,\frac{\sigma^2}{2\lambda}I)
=\mathcal{N}(0,0.25\,I)$ with standard deviation $0.5$ per component.

All experiments use the quadratic kernel simplification of Section~\ref{sec:quadratic}
(batch mean trick for the mean-field term), the adversarial optimizer SGD with
$\eta_\mathrm{test}=10^{-2}$, the generator optimizer Adam with
$\eta_\mathrm{gen}=10^{-3}$, and one adversary update every two generator steps. The
base distribution is $\mathcal{U}[0,1)^{D_\mathrm{base}}$ for stationary experiments
and $\mathcal{N}(0,I)$ for transient experiments.

\subsection{Experiment 1: 2D Stationary McKean--Vlasov}
\label{sec:exp3}

We begin with the two-dimensional stationary problem, whose exact solution
$\rho^*=\mathcal{N}(0,0.25\,I_2)$ provides a stringent test of the method's ability to
produce the correct shape and not merely the correct first two moments. The pushforward
network maps a base noise vector of dimension $D_\mathrm{base}=8$ through three hidden
layers of width $128$ with Tanh activations to $\mathbb{R}^2$, and is trained against an
ensemble of $K=2000$ plane-wave test functions whose frequencies are initialized at scale
$2.0$. Both the primary batch and the secondary sample used to estimate $E_W$ contain
$M=2000$ and $M_W=4000$ samples respectively. Training runs for $5000$ epochs with an
adversary update every two generator steps, and completes in $27.4\,$s on a single GPU.

The loss converges to $1.15\times 10^{-3}$, and the learned per-component standard
deviations are $0.492$ and $0.474$, against the exact value of $0.500$. As shown in
Figure~\ref{fig:exp3}, the scatter of learned samples closely hugs the exact $2\sigma$
ellipse, and the marginal histogram of $x_1$ lies nearly on top of the exact
$\mathcal{N}(0,0.25)$ density throughout its support.

\begin{figure}[h]
\centering
\includegraphics[width=0.92\textwidth]{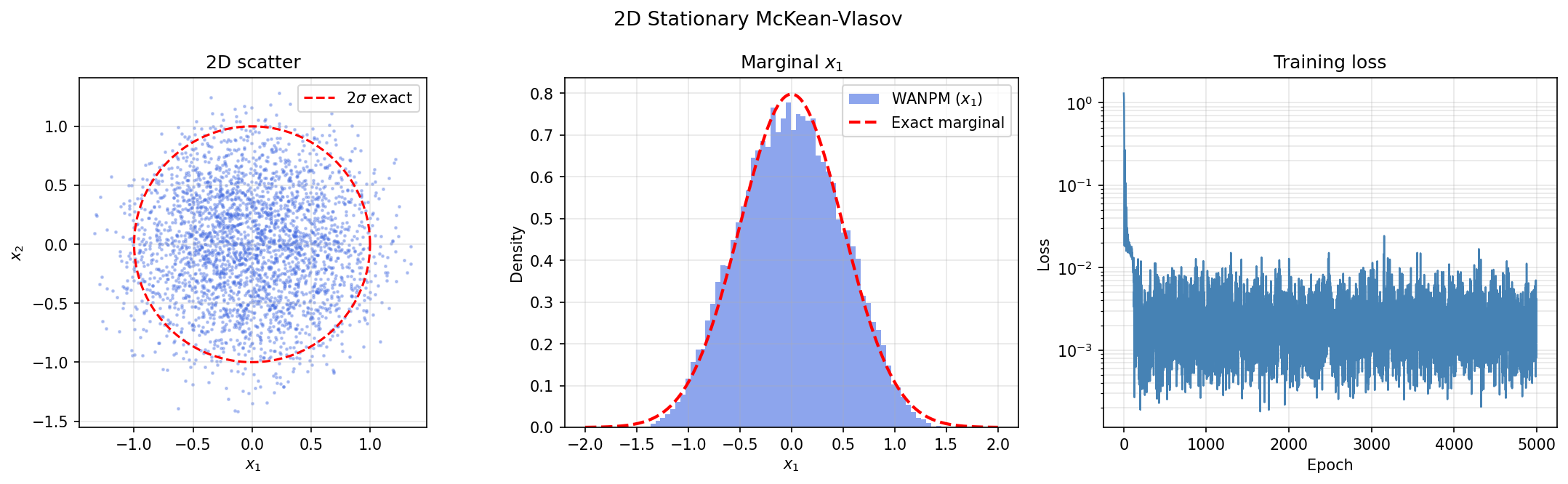}
\caption{Experiment 1 (2D Stationary). \emph{Left:} scatter plot of $3000$ learned
samples with the exact $2\sigma$ ellipse (red dashed). \emph{Center:} marginal density
of $x_1$ (histogram) vs.\ exact $\mathcal{N}(0,0.25)$ (red dashed).
\emph{Right:} training loss convergence over $5000$ epochs.}
\label{fig:exp3}
\end{figure}

\subsection{Experiment 2: 20D Stationary McKean--Vlasov}
\label{sec:exp5}

To examine scalability in the stationary setting, we solve the same problem in $n=20$
dimensions, where the exact solution is $\rho^*=\mathcal{N}(0,0.25\,I_{20})$. The
pushforward network is deliberately kept compact --- a two-hidden-layer architecture of
width $64$ with $D_\mathrm{base}=30$ and $7{,}444$ trainable parameters total --- to
demonstrate that the method does not require a large model to handle moderate dimensions.
Following the frequency-scaling rule~\eqref{eq:freqscaling}, the $K=5000$ test function
frequencies are initialized at scale $0.3$, substantially smaller than in the 2D case.
The training batch is $M=5000$ with $M_W=10000$ for the interaction term, and training
runs for $10{,}000$ epochs in $134.9\,$s.

The final loss is $3.77\times 10^{-4}$. Aggregated over all 20 dimensions, the mean
absolute error in the per-component mean is $0.0123$ and in the per-component variance
is $0.00406$, both well below $1\%$ of the exact values. Figure~\ref{fig:exp5a} confirms
that all 20 standard deviations are recovered accurately, with no systematic bias across
dimensions. Selected marginal densities, shown in Figure~\ref{fig:exp5b}, agree closely
with the exact $\mathcal{N}(0,0.25)$ density.

\begin{figure}[h]
\centering
\includegraphics[width=0.9\textwidth]{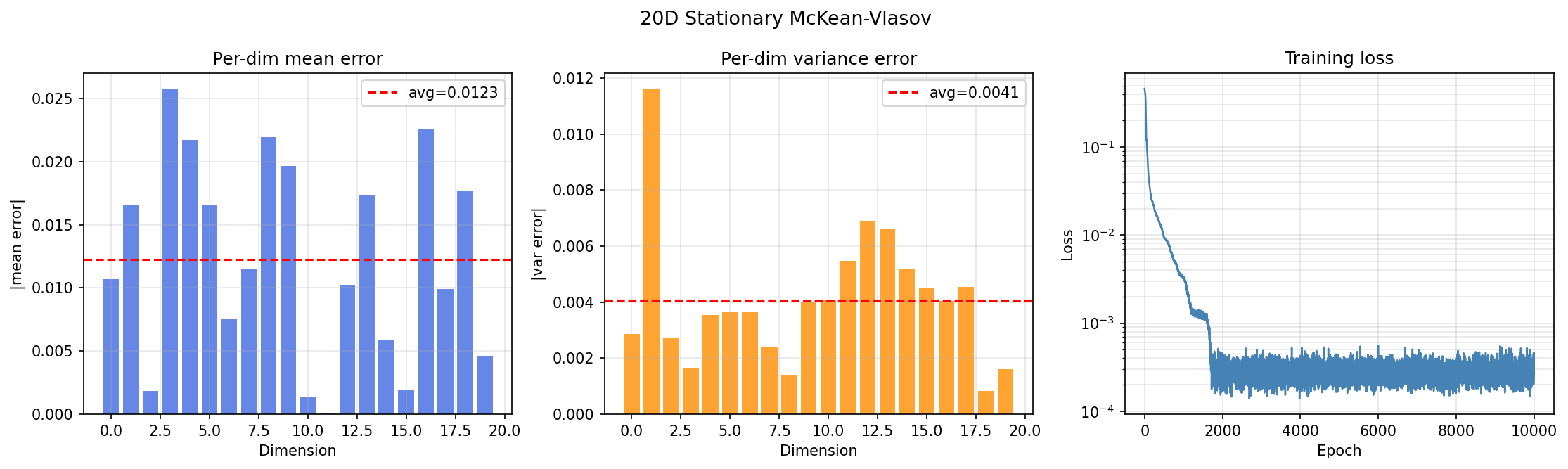}
\caption{Experiment 2 (20D Stationary). Per-component mean and standard deviation of
$20000$ learned samples, compared to exact values (dashed red). All 20 dimensions
are accurately recovered.}
\label{fig:exp5a}
\end{figure}

\begin{figure}[h]
\centering
\includegraphics[width=0.9\textwidth]{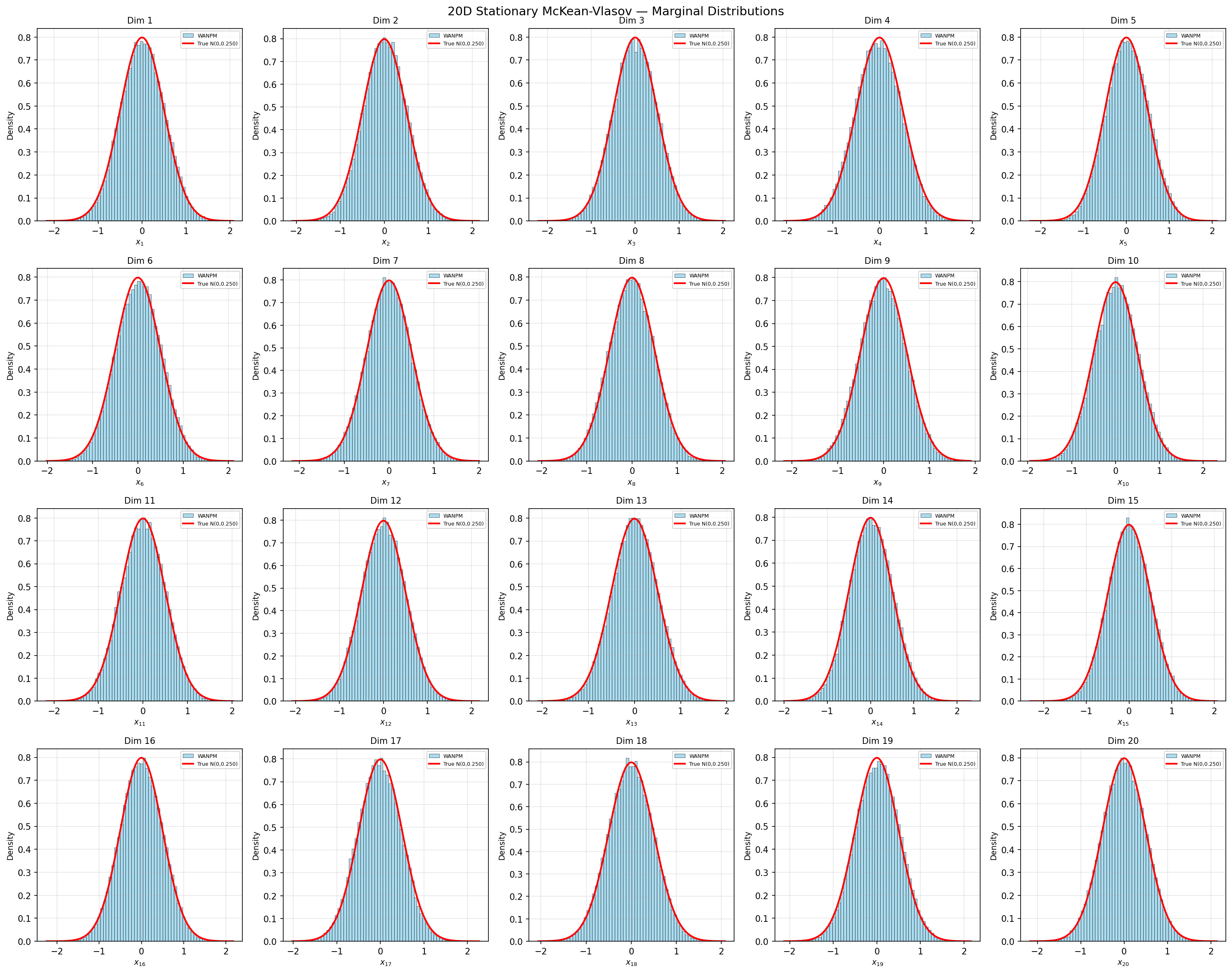}
\caption{Experiment 2 (20D Stationary). Selected marginal densities (histogram) vs.\
exact $\mathcal{N}(0,0.25)$ (red dashed).}
\label{fig:exp5b}
\end{figure}

\subsection{Experiment 3: 5D Transient McKean--Vlasov}
\label{sec:exp4}

We now turn to the time-dependent problem in $n=5$ dimensions over the interval
$[0,T]=[0,1]$. The initial distribution is $\rho_0=\mathcal{N}(\mu_0,0.25\,I_5)$, where
$\mu_0=(3.528,0.800,1.957,\ldots)$ is a random vector with components of order $2$,
chosen to make the transient mean evolution clearly visible before it decays to zero.
The exact solution is the Gaussian $\mathcal{N}(m(t),\Sigma(t)I_5)$ with moments
given by~\eqref{eq:exact}.

The pushforward takes the form $F_{\vth}(t,\bx_0,\br)=\bx_0+\sqrt{t}\,\tilde{F}_{\vth}(t,\br)$
with a base noise dimension of $D_\mathrm{base}=16$ and a three-hidden-layer network of
width $128$. A total of $K=3000$ test functions are used, initialized at frequency scale
$0.1$ as dictated by the high-dimensional regime. The interior batch size is $M=3000$
(split as $N_T$ time nodes with $M_\mathrm{per}$ samples each), the interaction batch is
$M_W=6000$, and the terminal and initial batches are $M_0=M_T=1000$. Training
for $10{,}000$ epochs takes $110.3\,$s.

The final loss is $1.96\times10^{-5}$. The per-component mean and variance errors at
three representative times are tabulated below; the variance error is particularly small
at $t=1$, consistent with the distribution having nearly relaxed to its equilibrium
$\mathcal{N}(0,0.25\,I_5)$ by that time.

\begin{center}
\begin{tabular}{ccc}
\toprule
$t$ & $|\bar{m}_\mathrm{err}|$ (avg) & $|\bar{\Sigma}_\mathrm{err}|$ (avg) \\
\midrule
$0.1$ & $0.0237$ & $0.0452$ \\
$0.5$ & $0.0643$ & $0.0115$ \\
$1.0$ & $0.0267$ & $0.0023$ \\
\bottomrule
\end{tabular}
\end{center}

Figure~\ref{fig:exp4a} shows the learned mean and variance trajectories for all five
components overlaid on the exact curves; the method tracks both quantities faithfully
throughout the transient. Marginal histograms at $t=T=1$ are shown in
Figure~\ref{fig:exp4b}.

\begin{figure}[h]
\centering
\includegraphics[width=0.95\textwidth]{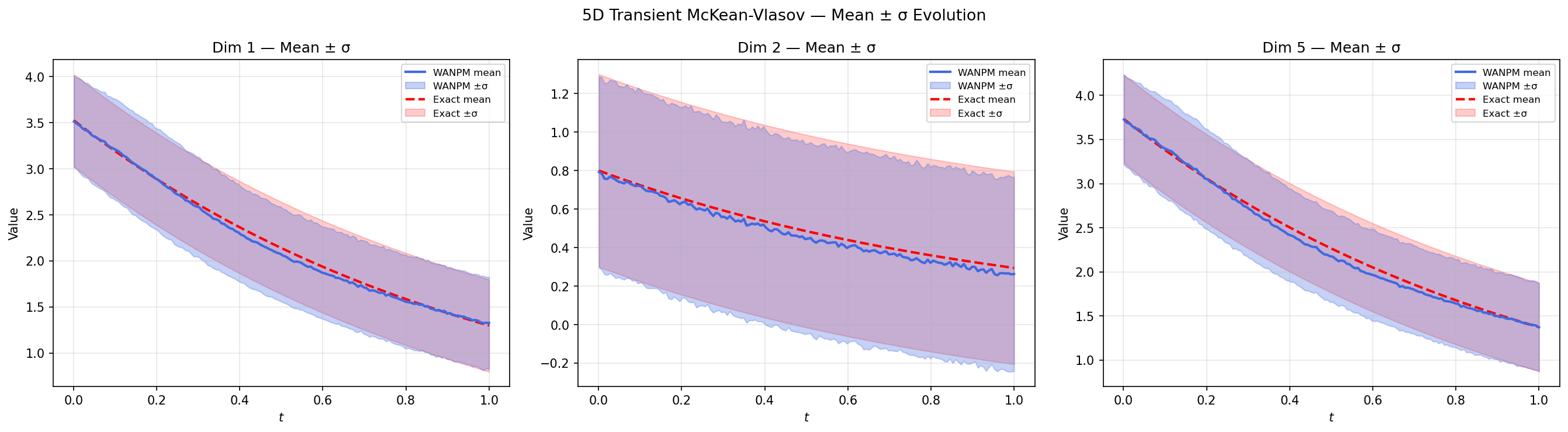}
\caption{Experiment 3 (5D Transient). Mean and variance evolution over $[0,1]$ for all
5 components: learned (solid) vs.\ exact (dashed). The mean decays as
$m_i(t)=m_{0,i}e^{-\theta t}$ and the variance relaxes to the equilibrium value $0.25$.}
\label{fig:exp4a}
\end{figure}

\begin{figure}[h]
\centering
\includegraphics[width=0.95\textwidth]{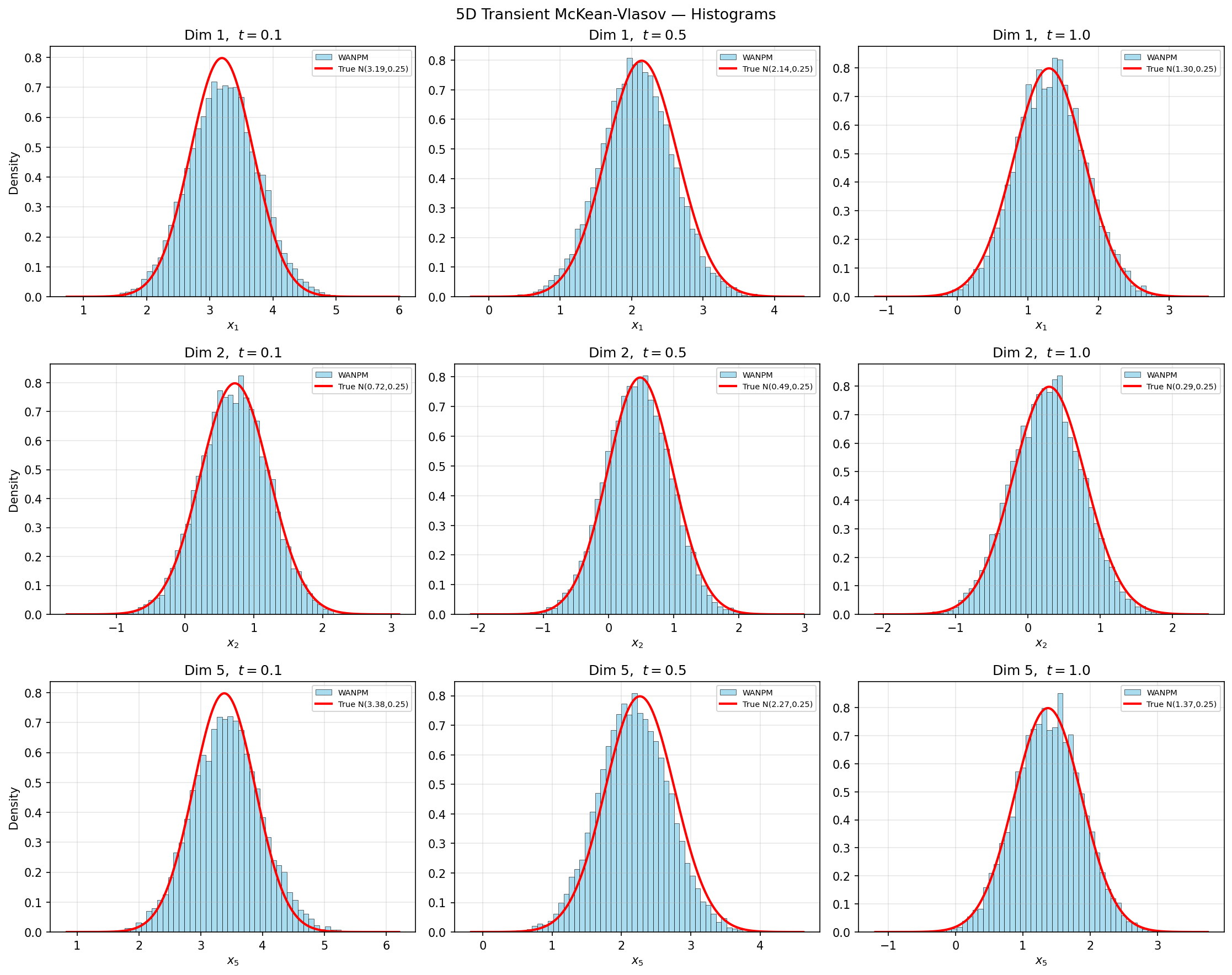}
\caption{Experiment 3 (5D Transient). Per-component marginal histograms at $t=T=1$
vs.\ exact Gaussian (red dashed).}
\label{fig:exp4b}
\end{figure}

\subsection{Experiment 4: 100D Transient McKean--Vlasov}
\label{sec:exp6}

The final experiment pushes the method to $n=100$ dimensions, where the exact solution
is again a time-evolving Gaussian with initial mean $\mu_0$ sampled from
$\mathcal{N}(0,I_{100})$ and initial variance $0.25$ per component. The scale of the
problem necessitates a larger network: $D_\mathrm{base}=200$ with three hidden layers of
width $256$, giving $71{,}780$ parameters, run on an NVIDIA A100-SXM4-80GB GPU. The
interior and interaction batch sizes are $M=10{,}000$ and $M_W=20{,}000$; terminal and
initial batches each contain $M_0=M_T=2{,}000$ samples. The $K=5{,}000$ test functions
are initialized at frequency scale $0.1$, in line with the scaling rule for high
dimensions. Training for $10{,}000$ epochs completes in $10.3\,$minutes.

Despite the formidable dimension, the method achieves a final loss of $2.06\times10^{-4}$.
The per-component errors, reported in the table below, show that the average absolute
mean error stays below $2.6\%$ across all times, and the average variance error remains
below $0.5\%$ of the equilibrium variance throughout the evolution.

\begin{center}
\begin{tabular}{ccccc}
\toprule
$t$ & $|\bar{m}_\mathrm{err}|$ (avg) & $|\bar{\Sigma}_\mathrm{err}|$ (avg)
    & $\|m_\mathrm{err}\|_2$ & $\|\Sigma_\mathrm{err}\|_2$ \\
\midrule
$0.1$ & $0.0259$ & $0.00467$ & $0.329$ & $0.0674$ \\
$0.5$ & $0.0156$ & $0.00443$ & $0.196$ & $0.0644$ \\
$1.0$ & $0.0103$ & $0.00335$ & $0.130$ & $0.0418$ \\
\bottomrule
\end{tabular}
\end{center}

The $\ell^2$ norms of the errors decrease monotonically in time, reflecting the
contraction of the distribution toward its stationary state. Figure~\ref{fig:exp6loss}
shows the loss trajectory over $10{,}000$ epochs. Figure~\ref{fig:exp6evol} displays the
learned mean and standard deviation curves for all $100$ components simultaneously,
confirming that the method tracks the $e^{-\theta t}$ mean decay and the variance
relaxation without confusion between components despite the high dimensionality.
Representative marginal histograms at $t=1$ in Figure~\ref{fig:exp6hist} agree closely
with the exact Gaussian, and similar agreement is observed across all components.

\begin{figure}[h]
\centering
\includegraphics[width=0.65\textwidth]{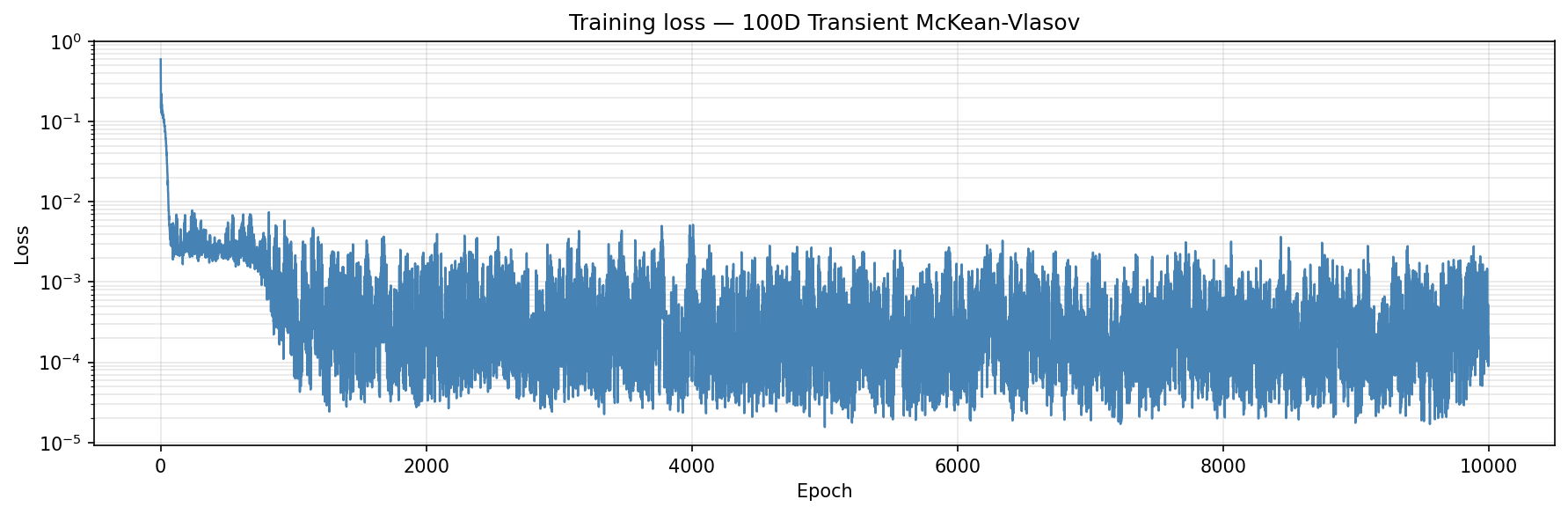}
\caption{Experiment 4 (100D Transient). Training loss over $10000$ epochs on an A100
GPU. Total training time: $10.3\,$min.}
\label{fig:exp6loss}
\end{figure}

\begin{figure}[h]
\centering
\includegraphics[width=0.95\textwidth]{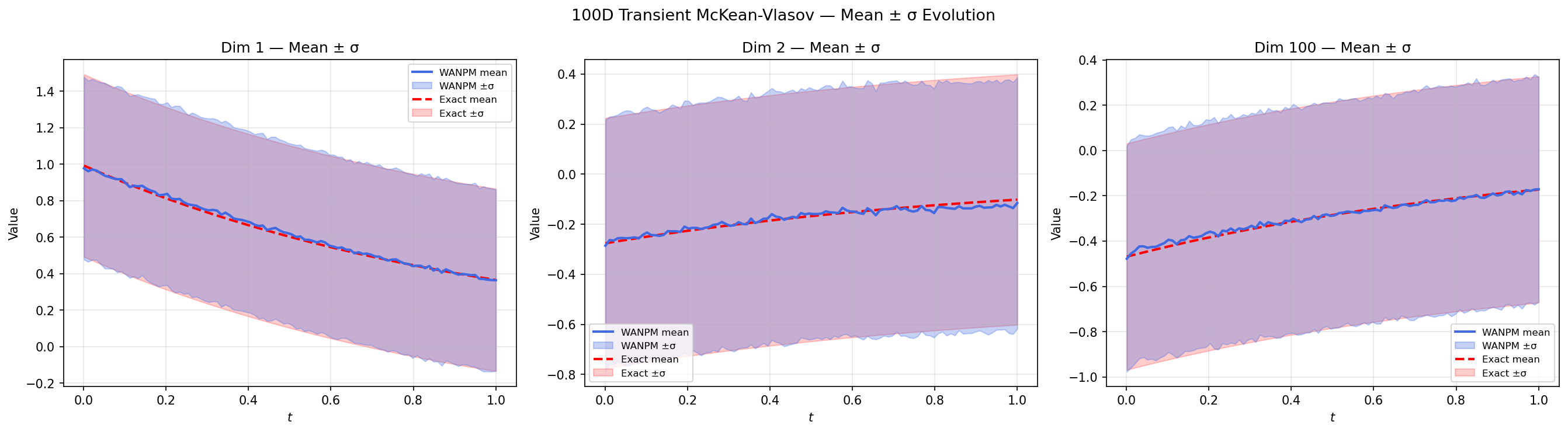}
\caption{Experiment 4 (100D Transient). Mean (top) and standard deviation (bottom) over
time for all 100 components: learned (blue) vs.\ exact (red dashed). The mean decays
at rate $e^{-\theta t}$ and the variance relaxes to $0.25$.}
\label{fig:exp6evol}
\end{figure}

\begin{figure}[h]
\centering
\includegraphics[width=0.95\textwidth]{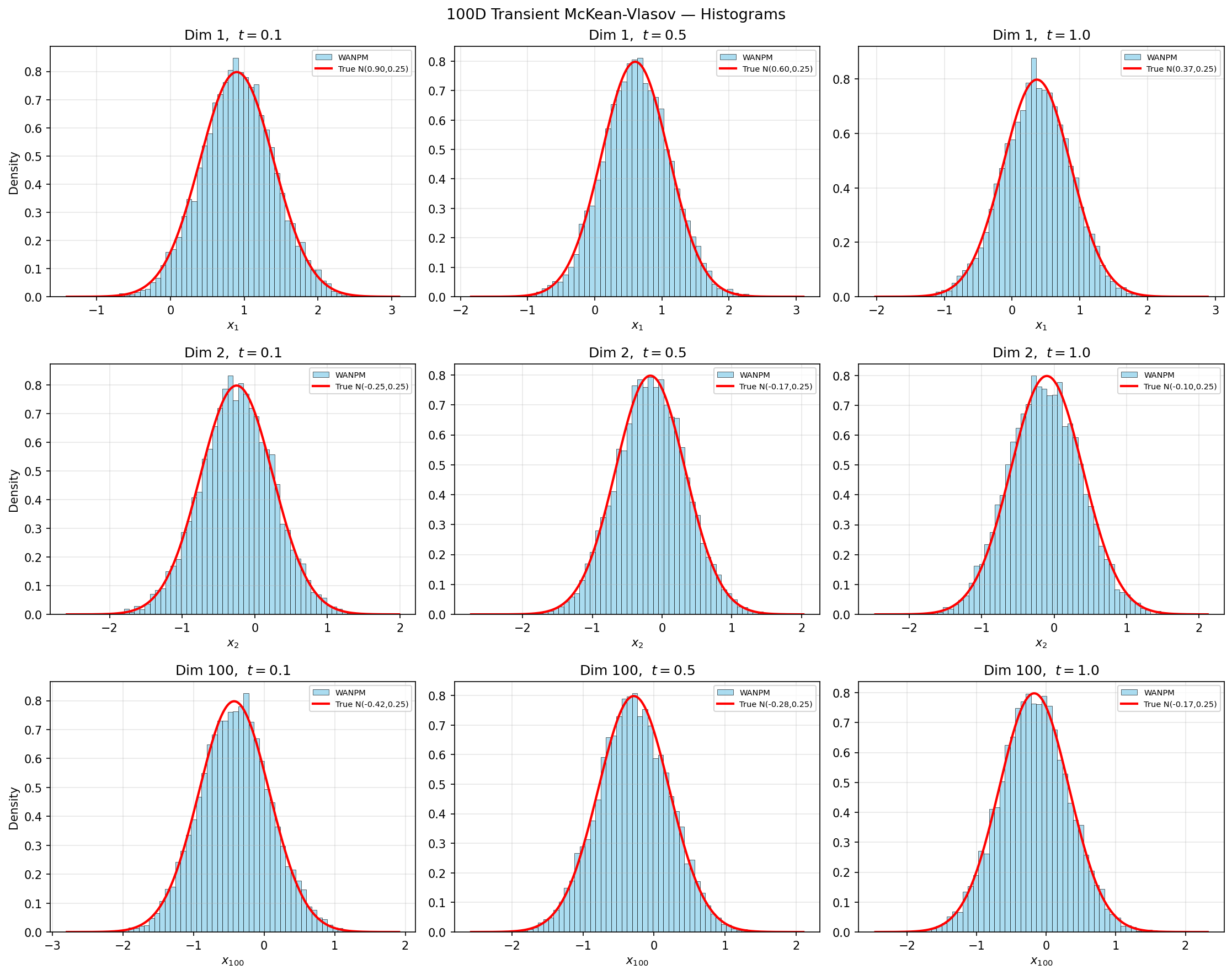}
\caption{Experiment 4 (100D Transient). Selected marginal histograms at $t=1$ vs.\
exact Gaussian (red dashed). Results are representative of all 100 components.}
\label{fig:exp6hist}
\end{figure}

\section{Conclusion}
\label{sec:conclusion}

We have developed the Weak Adversarial Neural Pushforward Method for the McKean--Vlasov
mean-field Fokker--Planck equation, covering both the stationary and time-dependent
settings. The central findings are as follows.

For the quadratic (granular media) interaction kernel, the mean-field nonlinearity
reduces to the sample mean of the primary training batch, requiring no secondary sampling
and no modification to the pushforward architecture. The same principle extends to
polynomial-degree and separable kernels, for which the interaction integrals reduce to
batch moments or scalar expectations.

The initialization scale of adversarial test function frequencies is critical. Small
initial frequencies allow the two-point distribution, which matches the exact Gaussian in
mean and variance, to achieve a lower loss than the true solution. The required scale
follows the dimension-dependent rule~\eqref{eq:freqscaling}: for an $n$-dimensional
problem the scale should decrease as $1/\sqrt{n}$. The adversarial training is in
principle self-adaptive, but does not always converge to the correct frequency without a
good initialization.

For the time-dependent problem, a tensor-product sampling structure over time and space
is needed to produce valid per-time mean-field estimates without bias. Fixed quadrature
nodes in time, with independent Monte Carlo draws at each node, resolve this correctly at
no extra computational cost.

Numerical experiments on the linear McKean--Vlasov benchmark in 2, 5, 20, and 100
dimensions confirm accurate recovery of both the stationary and transient Gaussian
distributions, with per-component mean and variance errors at the $1$--$5\%$ level and
training times from $27\,$s to $10\,$min on a single GPU.

Natural extensions include higher-dimensional stationary problems, separable and singular
interaction kernels (requiring mollification of $\nabla W$ for the Keller--Segel case),
and granular-media-type kernels exhibiting phase transitions. The Keller--Segel kernel
$W(\bx)=-\log\|\bx\|$ requires a secondary batch for the interaction integral, and the
singular-kernel estimator relies on the oddness cancellation analyzed in
Remark~3 of Section~\ref{sec:timedep}.

\end{document}